\newcommand{\be}{\begin{equation}}
\newcommand{\ee}{\end{equation}}
\newcommand{\beq}{\begin{eqnarray}}
\newcommand{\eeq}{\end{eqnarray}}
\newtheorem{thm}{Theorem}[section]
\newtheorem{lma}{Lemma}[section]
\newtheorem{prop}{Proposition}[section]
\newtheorem{cor}{Corollary}[section]
\theoremstyle{remark}
\newtheorem{rem}{Remark}[section]
\numberwithin{equation}{section}
\newtheorem{ex}{Example}[section]
\def\be{\begin{equation}}
\def\ee{\end{equation}}
\def\bee{\begin{equation*}}
\def\eee{\end{equation*}}
\def\lf{\left}
\def\ri{\right}
\def\K{K\"ahler }
\def\KE{K\"ahler-Einstein }
\def\KR{K\"ahler-Ricci }
\def\Ric{\text{\rm Ric}}
\def\Rm{\text{\rm Rm}}
\def\ddb{\sqrt{-1}\partial\bar\partial}
\def\p{\partial}
\def\heat{\lf(\frac{\p}{\p t}-\Delta_t\ri)}
\def\tr{\operatorname{tr}}
\def\e{\epsilon}
\begin{document}

\title[]
{Complete K\"ahler-Einstein metric on Stein manifolds with negative curvature}

 \author{Man-Chun Lee}
\address[Man-Chun Lee]{Mathematics Department, Northwestern University, 2033 Sheridan Road, Evanston, IL 60208.}
\email{mclee@math.northwestern.edu}

\thanks{The author was partially supported by NSF grant DMS-1709894.}

\renewcommand{\subjclassname}{
  \textup{2010} Mathematics Subject Classification}
\subjclass[2010]{Primary 32Q15; Secondary 53C44
}

\date{\today}

\begin{abstract}
We show the existence of complete negative \KE metric on Stein manifolds with negatively pinched holomorphic sectional curvature. We prove that any \K metrics on such manifolds can be deformed to the complete negative \KE metric using the normalized \KR flow.
\end{abstract}


\maketitle

\markboth{Man-Chun Lee}{Complete K\"ahler-Einstein metric on Stein manifolds with negative curvature}
\section{introduction}

In \cite{WuYau2017}, Wu and Yau proved that if a complete noncompact \K manifold supports a complete bounded curvature \K metric with holomorphic sectional curvature bounded from above by a negative constant, then it supports a complete negative \KE metric with bounded curvature. This extended the previous work \cite{WuYau2016,TosattiYang2015} to the noncompact case. Using the \KR flow approach, this was recovered by Tong \cite{Tong2018}, see also the compact case in \cite{Nomura2017}. A natural question is to ask if the curvature boundedness assumption is necessary in order to obtain the existence of \KE without bounded curvature conclusion. In \cite{HuangLeeTamTong2018}, the author together with the collaborators showed that the curvature boundedness assumption can be weakened to the existence of a exhaustion function with bounded complex hessian. The main ingredient is the construction of long-time \KR flow solution using Chern-Ricci flow approximation technique introduced by the author and Tam in \cite{LeeTam2017}.

In this note, we continue to study the existence problem using the \KR flow approach. We focus on the special case when $M$ is a Stein manifold. We show the existence of complete negative \KE on $M$ by establishing a long-time solution to the \KR flow on $M$. More precisely, we showed that on the negatively curved Stein manifold $M$, the \KR flow has a long-time solution starting from \textit{any} \K metric which can be incomplete and have unbounded curvature.

\begin{thm}\label{main}
Suppose $M^n$ is a Stein manifold and $h$ is a complete \K metric on $M$ with holomorphic sectional curvature $H_h\leq -\kappa$ for some $\kappa>0$. Then any \K metric $g_0$ on $M$ admits a longtime solution to the \KR flow on $M\times [0,+\infty)$ such that $g(t)$ is instantaneous complete for $t>0$. 
\end{thm}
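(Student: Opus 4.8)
The plan is to obtain $g(t)$ as a limit of complete, bounded--curvature \KR flows on an exhaustion of $M$, following the Chern--Ricci flow approximation scheme of \cite{LeeTam2017,HuangLeeTamTong2018}. Since $M$ is Stein, fix a smooth strictly plurisubharmonic exhaustion $\rho$ and set $\Omega_i=\{\rho<c_i\}$ for $c_i\nearrow\infty$, so that the $\Omega_i$ are smoothly bounded strictly pseudoconvex domains with $\Omega_i\Subset\Omega_{i+1}$ and $\bigcup_i\Omega_i=M$. On each $\Omega_i$ one builds a complete \K metric $g_{0,i}$ of bounded curvature with $g_{0,i}\ge g_0$ and $g_{0,i}\to g_0$ in $C^\infty_{loc}(M)$; for instance $g_{0,i}=g_0+\e_i\hat g_i$ with $\hat g_i$ the Cheng--Yau complete \KE metric of $\Omega_i$ and $\e_i\downarrow0$, possibly after a short Chern--Ricci flow to enforce the curvature bound. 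By Shi's theorem the normalized \KR flow $\p_t g_i(t)=-\Ric(g_i(t))-g_i(t)$ starting from $g_{0,i}$ exists on $\Omega_i\times[0,T_i)$ with bounded curvature at each time, and the goal is to show $T_i=\infty$ and pass $i\to\infty$.

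The central ingredient is a parabolic Schwarz lemma for $g_i(t)$ with the fixed reference $h$ that uses only the one-sided bound $H_h\le-\kappa$. Writing $u=\tr_{g_i(t)}h$ and applying Royden's lemma, the curvature term satisfies $-g_i^{p\bar q}g_i^{r\bar s}R^h_{p\bar q r\bar s}\ge c_n\kappa\, u^2$, which after the standard evolution computation gives $\heat u\le C_0u-c_0u^2$ on $\Omega_i$ with $C_0,c_0$ depending only on $n,\kappa$. Because $g_i(t)$ is complete while $g_{0,i}$ blows up along $\p\Omega_i$, the function $u$ decays to $0$ towards $\p\Omega_i$, so the maximum principle may be applied at interior points; using the time weight $v=te^{-C_0t}u$ to remove the dependence on $u|_{t=0}$ yields $u\le C_T\,t^{-1}$ on $\Omega_i\times(0,T]$ for each $T$, with $C_T$ independent of $i$. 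Equivalently $g_i(t)\ge C_T^{-1}t\,h$, so for every $t>0$ the approximating flows are uniformly bounded below by a complete metric; this is the source of the instantaneous completeness of the limit, and its $i$- and $T$-independence is also what forces $T_i=\infty$.

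The remaining estimates are local and uniform in $i$. Combining the lower bound above with the evolution of $\log\det g_i(t)$ and a local parabolic Schwarz / Calabi-type argument gives, on every $K\Subset M$ and $[a,b]\subset(0,\infty)$, two-sided bounds $C_K^{-1}g_0\le g_i(t)\le C_Kg_0$ together with $|\Rm(g_i(t))|_{g_i(t)}\le C_{K,a,b}$; Shi's interior estimates then control all derivatives of the curvature. A local stability estimate, using that $g_{0,i}=g_0$ up to an error vanishing on compacta, shows in addition $g_i(t)\to g_0$ in $C^\infty_{loc}(M)$ as $t\to0^+$, uniformly in $i$. Granting this, Arzel\`a--Ascoli and a diagonal argument produce a subsequential limit $g(t)$ in $C^\infty_{loc}(M\times(0,\infty))$ solving the \KR flow there; the bound $g(t)\ge C_T^{-1}t\,h$ makes $g(t)$ complete for $t>0$, and $g(t)\to g_0$ in $C^\infty_{loc}$ as $t\to0^+$, so $g(t)$ is a solution on $M\times[0,\infty)$ issuing from $g_0$.

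The main difficulty is the local upper and curvature bounds above, uniform in $i$ and valid down to $t=0$. The usual derivation of such bounds compares the flow against a fixed background metric of \emph{bounded} curvature, but here $h$ is only controlled through $H_h\le-\kappa$ and may otherwise have unbounded curvature, so every maximum-principle comparison against $h$ must be routed through Royden's lemma and the one-sided bound alone, while the control near $t=0$ has to come from the fixed smooth datum $g_0$. Making these inputs cooperate --- in particular excluding curvature concentration of $g_i(t)$ on compact sets as $i\to\infty$ or $t\to0^+$ --- is the technical crux, and is where the Stein hypothesis enters, through the exhaustion by pseudoconvex domains carrying good complete metrics and through the Chern--Ricci flow smoothing of \cite{LeeTam2017}.
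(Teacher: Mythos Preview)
Your overall architecture matches the paper's: exhaust $M$ by smoothly bounded strictly pseudoconvex sublevel sets of a strictly plurisubharmonic exhaustion, perturb $g_0$ by $\e$ times the Cheng--Yau \KE metric on each domain to get complete approximating data, run the \KR flow, apply the parabolic Schwarz lemma with Royden's trick to get the uniform lower bound $g_{m,\e}(t)\ge c_n\kappa\, t\, h$, and extract a limit by Arzel\`a--Ascoli. So the skeleton is correct.

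Where you diverge from the paper is precisely at the step you flag as ``the technical crux,'' and your proposed resolution is not the one that works here. The paper does \emph{not} use Chern--Ricci flow smoothing or Calabi-type third-order estimates for the short-time control; those approaches, as in \cite{HuangLeeTamTong2018}, lean on a background with bounded curvature or an exhaustion with bounded complex Hessian, neither of which is available. Instead the paper uses Ricci flow \emph{pseudolocality}: on any compact $\Omega\Subset M$, the local geometry of $g_0$ (hence of $g_{0,m,\e}$ for $\e$ small) gives a uniform curvature radius and volume lower bound, so Perelman-type pseudolocality yields $|\Rm(g_{m,\e})|\le\Lambda$ on $\Omega\times[0,\sigma]$ with $\Lambda,\sigma$ independent of $m,\e$. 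This is what anchors the metric at a fixed positive time and simultaneously takes care of the $t\to 0^+$ convergence you handle separately. The second tool you gloss over is the instantaneous scalar curvature bound $R_{g_{m,\e}(t)}\ge -n/t$ (from $\heat R\ge R^2/n$ and a maximum principle on complete flows); integrating the volume form from $\sigma$ to $t$ with this bound gives $\det g_{m,\e}(t)\le (t/\sigma)^n\det g_{m,\e}(\sigma)$, and combining with the Schwarz lower bound via $\tr_h g\le(\det g/\det h)(\tr_g h)^{n-1}$ produces the local \emph{upper} bound. Without the pseudolocality anchor at $t=\sigma$ there is no starting point for this integration, which is why your ``evolution of $\log\det$'' step, as written, does not close.

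A smaller point: $g_{0,i}=g_0+\e_i\hat g_i$ need not have bounded curvature (since $g_0$ is arbitrary), so you cannot invoke Shi directly. The paper sidesteps this by observing that $g_{0,m,\e}$ is uniformly equivalent to the bounded-curvature Cheng--Yau metric $\tilde\omega_m$ and citing the existence theorem of \cite{ChauLee2019}, which only needs that equivalence together with $\omega_{0,m,\e}-s\Ric(\tilde\omega_m)>\delta\tilde\omega_m$.
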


On general \K manifold, the short-time existence of the \KR flow was first studied by Shi \cite{Shi1989,Shi1997} when the initial metric $g_0$ is complete with bounded curvature. In general without curvature condition, the existence theory was still being unclear except the surface case. In the case of surface, the existence of the Ricci flow starting from incomplete metric has been studied in details by Giesen and Topping \cite{GiesenTopping2010,GiesenTopping2011,GiesenTopping2013}. This can be viewed as a partial generalization of their result in higher dimension.

By combining the existence with the convergence result in \cite{HuangLeeTamTong2018}, we have the existence of the complete negative \KE metric.
\begin{cor}\label{main-cor}
Suppose $M$ is a Stein manifold and $h$ is a complete \K metric on $M$ with holomorphic sectional curvature $H_h\leq -\kappa$ for some $\kappa>0$, then $M$ admits a unique complete \KE metric with negative scalar curvature.
\end{cor}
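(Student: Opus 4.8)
The plan is to use the longtime \KR flow of Theorem \ref{main} as a bridge: deform a convenient \K metric to the complete \KE metric, then settle uniqueness by a maximum principle.

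\emph{Existence.} A Stein manifold carries \K metrics, so fix one, say $g_0=h$ itself. By Theorem \ref{main} there is a solution $g(t)$, $t\in[0,+\infty)$, of the normalized \KR flow $\p_t g=-\Ric(g)-g$ which is complete for every $t>0$. Fix $t_0>0$. Then $g(t_0)$ is a complete \K metric on $M$, and the pair $\big((M,g(t_0)),\,h\big)$ satisfies the hypotheses of the convergence theorem of \cite{HuangLeeTamTong2018}: the normalized \KR flow out of $g(t_0)$ exists for all time and stays complete, and $M$ supports the complete metric $h$ with $H_h\le-\kappa$. That theorem then gives $g(t)\to g_\infty$ in $C^\infty_{loc}$ as $t\to\infty$, where $g_\infty$ is a \emph{complete} \K metric with $\Ric(g_\infty)=-g_\infty$; in particular its scalar curvature equals $-n<0$, which is the desired \KE metric. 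One caveat: to invoke \cite{HuangLeeTamTong2018} verbatim one should confirm that the solution produced by Theorem \ref{main} carries the a priori estimates used there — a two-sided comparison $C^{-1}h\le g(t)\le Ch$ on $[t_0,+\infty)$, local higher-order bounds, and a volume lower bound guaranteeing $g_\infty\ge c\,h$ — which is expected to come from the same Wu--Yau/Royden-type maximum principle estimates along the flow that underlie the proof of Theorem \ref{main}.

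\emph{Uniqueness.} This is the standard uniqueness statement for complete \KE metrics of negative scalar curvature. Briefly: after rescaling, two such metrics $\omega_1,\omega_2$ may be taken to satisfy $\Ric(\omega_i)=-\omega_i$. Since $\Ric(\omega_i)=-\ddbar\log\det g_i$ in local coordinates, the globally defined function $\varphi:=\log(\omega_2^n/\omega_1^n)$ obeys $\ddb\varphi=\Ric(\omega_1)-\Ric(\omega_2)=\omega_2-\omega_1$, so $\omega_2=\omega_1+\ddb\varphi$ with no cohomological obstruction, and the arithmetic--geometric mean inequality gives $\tr_{\omega_1}\ddb\varphi=\tr_{\omega_1}\omega_2-n\ge n(e^{\varphi/n}-1)$. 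As $\Ric(\omega_1)=-\omega_1$ is bounded below, a maximum principle argument on the complete manifold $(M,\omega_1)$ forces $\sup\varphi\le0$; applying the symmetric inequality on $(M,\omega_2)$ forces $\inf\varphi\ge0$. Hence $\varphi\equiv0$ and $\omega_1=\omega_2$. Combined with the existence part, Corollary \ref{main-cor} follows.

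\emph{Main obstacle.} The uniqueness is routine once the global potential and the maximum principle are in place; the substantive point is the existence half, namely verifying that the longtime flow of Theorem \ref{main} genuinely meets the a priori hypotheses of the convergence result of \cite{HuangLeeTamTong2018} — principally the uniform comparison with $h$ away from $t=0$ and the completeness of the limit $g_\infty$ — rather than merely existing for all time.
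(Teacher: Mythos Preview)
Your existence argument is essentially the paper's own proof: run the longtime flow of Theorem \ref{main} from $h$ and invoke \cite[Theorem 5.1]{HuangLeeTamTong2018} to get convergence to a complete \KE metric. Two small points. First, Theorem \ref{main} is stated for the \emph{unnormalized} flow $\p_t g=-\Ric(g)$, not $\p_t g=-\Ric(g)-g$; the two are related by the standard rescaling $\tilde g(s)=e^{-s}g(e^s-1)$, so this is cosmetic, but you should say so rather than misquote the theorem. Second, your uniqueness argument has a genuine gap: the Omori--Yau maximum principle requires $\varphi=\log(\omega_2^n/\omega_1^n)$ to be bounded above before you can conclude $\sup\varphi\le 0$, and you have not established this. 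The fix is to first show any complete \KE metric $\omega$ with $\Ric(\omega)=-\omega$ on $M$ is uniformly equivalent to $h$: the elliptic analogue of Lemma \ref{ins-low} (Royden's Schwarz lemma applied to the identity map) gives $\omega\ge c(n,\kappa)h$, and then the volume comparison coming from the Monge--Amp\`ere equation together with this lower bound gives the upper bound; once $\omega_1,\omega_2$ are both equivalent to $h$, $\varphi$ is bounded and your maximum principle step goes through. Alternatively, the uniqueness is already contained in \cite{WuYau2017} or \cite{HuangLeeTamTong2018} and can simply be cited.
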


Using Corollary \ref{main-cor}, we know that the following situation admit complete negative \KE metric.
\begin{ex}Using a result of Wu \cite{Wu1967} and Corollary \ref{main-cor}, any complete simply connected \K manifold $(M,g)$ with sectional curvature bounded from above by $-1$ admits a complete negative \KE metric.
\end{ex}

\begin{ex}
Let $\mathbb{B}^{n-1}$ be the unit ball in $\mathbb{C}^n$, then any  proper embedded complex sub-manifold $\Sigma$ of $\mathbb{B}^{n-1}$ admits a complete negative \KE metric. To see this, $\mathbb{B}^{n-1}$ admits the Bergman metric $h$ with constant holomorphic sectional curvature. By the decreasing property of holomorphic curvature and properness, the pull-back metric of $h$ on $\Sigma$ is a complete \K metric with holomorphic sectional curvature bounded from above by negative value. Then the existence followed from Corollary \ref{main-cor}.
\end{ex}

\begin{ex}
Let $\mathbb{D}$ be the unit disk in $\mathbb{C}$ associated with the Poincar\'e metric. Let $\Sigma$ be a proper embedded complex sub-manifold $\Sigma$ of $\mathbb{D}^n=\mathbb{D}\times \cdots\times \mathbb{D}$. The produce metric on $\mathbb{D}^n$ is complete with holomorphic sectional curvature bounded from above by negative value. Then $\Sigma$ admits a complete negative \KE metric using the argument in previous example.
\end{ex}

\section{a-priori estimates for the \KR flow}
Let $M^n$ be a complex manifold. The \KR flow on $M$ starting from initial metric $g_0$ is a family of \K metric $g(t)$ which satisfies 
\begin{equation}
\left\{
\begin{array}{ll}
\displaystyle \frac{\partial}{\partial t} g_{i\bar j}&=-R_{i\bar j} ;\\
g(0)&=g_0.
\end{array}
\right.
\end{equation}
on $M\times [0,T]$ for some $T>0$. We say that $g(t)$ is a complete solution if $g(t)$ is a complete metric on $M$ for each $t\in [0,T]$.

\begin{lma}\label{ins-low}
Let $g(t)$ be a complete solution to the \KR flow on $\Omega\times [0,T]$ for $T\leq +\infty$. Suppose $h$ is a \K metric on $\Omega$ which can possibly be incomplete such that its holomorphic sectional curvature is bounded from above by $-\kappa$ for some $\kappa>0$. Then on $\Omega \times (0,T]$, $g(t)$ satisfies
$$\frac{(n+1)\kappa t}{2n}h\leq g(t).$$
\end{lma}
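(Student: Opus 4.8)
The plan is to compare the evolving metric $g(t)$ with the fixed metric $h$ via the quantity $u = \operatorname{tr}_{g(t)} h$, i.e.\ the trace of $h$ with respect to $g(t)$, and to derive a differential inequality for $u$ (or more precisely for $tu$ or $u/t$) that forces the lower bound. The key point is that the holomorphic sectional curvature bound $H_h \le -\kappa$ enters through the Schwarz-lemma-type computation: when one computes $\heat \log u$ along the \KR flow, the ``bad'' curvature term coming from $h$ is controlled by the holomorphic sectional curvature of $h$ in the direction that (nearly) diagonalizes $h$ against $g(t)$, and by the standard trick of Royden/Wu--Yau this is bounded above by $-\tfrac{n+1}{2n}\kappa$ times $u$. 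Concretely I expect an inequality of the shape
\begin{equation*}
\heat \log u \le -\frac{(n+1)\kappa}{2n} u + C,
\end{equation*}
where the $+C$ (in fact one can often arrange $C=0$ on the $\log$ level, with the Ricci term of $g$ canceling favorably) comes from the $-R_{i\bar j}$ evolution interacting with the Laplacian term; the precise constant $\tfrac{n+1}{2n}$ is exactly the one appearing in the statement, which is reassuring.

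Next I would run a maximum principle argument on a space–time region. Since $\Omega$ need not be compact and $h$ need not be complete, I cannot apply the maximum principle globally; instead I would localize. Fix a point $x_0$ and a time $t_0\in(0,T]$, take a cutoff $\phi$ supported in a $g(t_0)$-ball (or better, use the distance-like function built from $h$ or an exhaustion function on the Stein manifold, though here $\Omega$ is a general domain so I would use cutoffs adapted to $g(0)$ with the usual Shi-type gradient bounds propagating the control), and apply the maximum principle to $\psi = t\,\phi\cdot(\text{something involving }u)$ or to $\log(tu) - A\cdot d^2$. The cleaner route: set $F = \log u + \log t$ and show $\heat F \le -\tfrac{(n+1)\kappa}{2n} e^{F}/t + (\text{lower order})$; at an interior space–time maximum of $\phi F$ one gets a pointwise bound $u \le \tfrac{2n}{(n+1)\kappa t}$ at that max, hence everywhere after letting the cutoff exhaust. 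Inverting this trace bound gives $\tfrac{(n+1)\kappa t}{2n}\, h \le g(t)$ as a bound on the smallest eigenvalue; one has to be slightly careful that a bound on $\operatorname{tr}_{g}h$ controls the top eigenvalue of $h$ relative to $g$ and hence the bottom eigenvalue of $g$ relative to $h$, which is exactly what is wanted.

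The main obstacle I anticipate is the lack of completeness of $h$ together with the lack of any a priori curvature bound on $g(t)$: the localization must be done with respect to a metric we \emph{do} control. I would handle this by working on a fixed relatively compact domain $\Omega'\Subset\Omega$, using that on $\Omega'$ the initial data is under control, propagating a distance-distortion estimate for a short time and then noting that the quantity $u$ only needs a one-sided (upper) barrier to close the maximum principle; since $u>0$ always, one can add a term like $\varepsilon/(r^2 - d_h^2)$ that blows up at $\partial\Omega'$ and push $\varepsilon\to 0$, so that the constant in the final inequality is genuinely independent of $\Omega'$. A secondary technical point is justifying the differentiation of $\operatorname{tr}_{g(t)} h$ and applying Calabi's trick to deal with the possible non-smoothness when eigenvalues of $h$ relative to $g$ collide — this is routine but should be mentioned. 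Once the local bound is in hand with a uniform constant, exhausting $\Omega$ and sending the auxiliary parameters to their limits yields the stated inequality on all of $\Omega\times(0,T]$.
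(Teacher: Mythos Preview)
Your overall strategy---parabolic Schwarz lemma plus Royden's trick to get a differential inequality for $u=\operatorname{tr}_{g(t)}h$, then a maximum-principle argument yielding $u\le \tfrac{2n}{(n+1)\kappa t}$---is exactly what the paper does. Two points where you diverge unnecessarily:

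First, work with $u$ itself rather than $\log u$. The Schwarz computation along the \KR flow gives directly
\[
\heat u \le g^{i\bar j}g^{k\bar l}\tilde R_{i\bar j k\bar l} \le -\frac{(n+1)\kappa}{2n}\,u^2,
\]
with no additive constant; the Ricci term from $\partial_t g$ cancels cleanly against the corresponding piece of $\Delta_t u$. Your ``$+C$'' is not there, and this clean form $\heat Q\le -cQ^2$ is what drives the sharp $1/(ct)$ bound.

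Second, and more importantly, your localization discussion is both unnecessary and muddled. The hypothesis is that $g(t)$ is \emph{complete}; that is the metric you should use for the maximum principle, not $h$ or $g_0$. The paper simply invokes \cite[Lemma~5.1]{HuangLeeTamTong2018}, a black-box result saying that on a complete \KR flow any nonnegative $Q$ satisfying $\heat Q\le -cQ^2$ obeys $Q\le 1/(ct)$. Your proposed barrier $\varepsilon/(r^2-d_h^2)$ on $\Omega'\Subset\Omega$ does not obviously close: on $\partial\Omega'$ the quantity $u$ is uncontrolled, and a barrier blowing up there does not force an interior upper bound independent of $\Omega'$. The correct localization (if you want to redo the cited lemma by hand) uses cutoffs built from the $g(t)$-distance function on the complete manifold, exploiting the good sign of the nonlinearity to absorb the cutoff errors.

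Finally, drop the remark about Calabi's trick: $\operatorname{tr}_{g(t)}h=g^{i\bar j}h_{i\bar j}$ is manifestly smooth, with no eigenvalue-collision issues.
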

\begin{proof}
Using parabolic Schwarz Lemma gives
\begin{equation}
\heat  \tr_gh \leq g^{i\bar j}g^{k\bar l}\tilde R_{i\bar jk\bar l}.
\end{equation}
where $\tilde R$ denotes the curvature tensor of metric $h$. By using a trick of Royden \cite[Lemma, p.552]{Royden1980}, the bisectional curvature quantities is bounded from above,
\begin{equation}
g^{i\bar j}g^{k\bar l}\tilde R_{i\bar jk\bar l}\leq -\frac{n+1}{2n}\kappa (\tr_gh)^2.
\end{equation}
Noted that the inequality also holds when $h$ is not complete. Hence, we have 
\begin{equation}\label{evo-trace}
\heat  \tr_gh \leq -\frac{n+1}{2n}\kappa (\tr_gh)^2.
\end{equation}

If $T<+\infty$, applying \cite[Lemma 5.1]{HuangLeeTamTong2018} with $Q=\tr_gh$ gives 
$$\tr_g h\leq \frac{2n}{\kappa(n+1)t}$$
on $\Omega \times [0,T]$. If $T=+\infty$, then apply it on $[0,L]$ and followed by letting $L\rightarrow +\infty$.
\end{proof}

We also have a instantaneous bound on the scalar curvature. 

\begin{lma}\label{inst-R}
Let $g(t)$ be a complete solution to the \KR flow on $\Omega\times [0,T]$ for $T\leq +\infty$, then the scalar curvature satisfies 
$$R_{g(t)}\geq -\frac{n}{t}.$$
\end{lma}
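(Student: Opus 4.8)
The plan is to apply the maximum principle directly to the scalar curvature, exploiting its reaction--diffusion evolution along the flow. Recall that for the \KR flow $\frac{\p}{\p t}g_{i\bar j}=-R_{i\bar j}$ one has $\frac{\p}{\p t}\log\det g=-R$ and $R_{i\bar j}=-\p_i\p_{\bar j}\log\det g$, so differentiating the latter in $t$ gives $\frac{\p}{\p t}R_{i\bar j}=\p_i\p_{\bar j}R$; combining this with $\frac{\p}{\p t}g^{i\bar j}=R^{i\bar j}$ yields the standard identity
\be\label{evo-R}
\heat R=|\Ric|^2_{g(t)},
\ee
where $\Delta_t$ is the complex Laplacian of $g(t)$ and $|\Ric|^2_{g(t)}=g^{i\bar l}g^{k\bar j}R_{i\bar j}R_{k\bar l}$. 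Since $(R_{i\bar j})$ is an $n\times n$ Hermitian matrix with trace $R$, Cauchy--Schwarz gives $|\Ric|^2_{g(t)}\geq\frac1n R^2$, so \eqref{evo-R} upgrades to
\be\label{R-react}
\heat R\geq\frac1n R^2 .
\ee

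Next I would feed \eqref{R-react} into the same parabolic comparison lemma already used for the trace estimate, \cite[Lemma 5.1]{HuangLeeTamTong2018}. Put $Q:=\max(-R,0)\geq 0$. On the open set $\{R<0\}$ we have $Q=-R$, and \eqref{R-react} reads $\heat Q\leq-\frac1n Q^2$, while near any point of $\{R>0\}$ the function $Q$ vanishes identically; thus $Q$ is a nonnegative subsolution of $\heat Q\leq-\frac1n Q^2$ in the relevant barrier sense on all of $\Omega\times(0,T]$ (at a positive interior spatial maximum $Q$ agrees with $-R$, and at a zero maximum $Q$ is locally constant). Applying \cite[Lemma 5.1]{HuangLeeTamTong2018} with this $Q$ and $a=\frac1n$ gives $Q\leq\frac{n}{t}$, that is $R_{g(t)}\geq-\frac{n}{t}$ on $\Omega\times(0,T]$. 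If $T=+\infty$ one runs the argument on $[0,L]$ and lets $L\to+\infty$, exactly as in the proof of Lemma \ref{ins-low}. Heuristically, $\phi(t)=-n/t$ is precisely the solution of the ODE $\phi'=\frac1n\phi^2$ that tends to $-\infty$ as $t\to0^+$, which is why no assumption at $t=0$ is required.

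The point where care is genuinely needed -- and the only reason the statement is not entirely routine -- is that $\Omega$ is noncompact and $g(t)$ may have curvature unbounded in space, so the maximum principle cannot be invoked naively; this is exactly the situation handled by \cite[Lemma 5.1]{HuangLeeTamTong2018}, whose only hypothesis is the completeness of $g(t)$ (it is a \KR-flow version of the localization technique of B.-L.\ Chen for instantaneous lower bounds on scalar curvature along complete solutions). A secondary, routine matter is to check the differential inequality for $Q=\max(-R,0)$ across $\{R=0\}$, which is immediate from the two observations above. I therefore expect the bulk of the work to be purely bookkeeping, with the substantive input being the cited noncompact maximum principle.
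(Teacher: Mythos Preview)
Your proof is correct and follows essentially the same approach as the paper: establish $\heat R\geq\frac1n R^2$ and then invoke \cite[Lemma 5.1]{HuangLeeTamTong2018}. The only cosmetic difference is that the paper applies the lemma with $Q=-R$ directly rather than with the truncation $Q=\max(-R,0)$; your version is a harmless refinement.
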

\begin{proof}
Noted that the scalar curvature $R$ satisfies $\heat R\geq \frac{1}{n}R^2$. The proof is similar to the that in Lemma \ref{ins-low} by applying \cite[Lemma 5.1]{HuangLeeTamTong2018} with $Q=-R$.
\end{proof}

We also need the following version of pseudolocality of Ricci flow from \cite{HeLee2018}.
\begin{prop}\label{pseudo}
For any $m\in \mathbb{N}$ and $v>0$, there is $\sigma(m,v),\Lambda(m,v)>0$ such that if $g(t)$ is a complete Ricci flow solution on $M^m\times [0,T]$ with $\sup_{[\tau,T]}|\Rm(g(t))|<+\infty$ for all $\tau>0$ and satisfies
\begin{enumerate}
\item $|\Rm(g(0))|\leq r^{-2}$ on $B_{g_0}(p,r)$;
\item $V_{g_0}\left(B_{g_0}(p,r)\right)\geq r^m v$,
\end{enumerate}
for some $p\in M$ and $r>0$, then we have 
$$|\Rm(x,t)|\leq \Lambda r^{-2}$$
on $B_{g(t)}(p,\sigma r)$, $t \leq \min\{ T,\sigma^2 r^2\}$.
\end{prop}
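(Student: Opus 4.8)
The plan is to reduce the statement, by rescaling and comparison geometry, to Perelman's pseudolocality theorem, and then to convert the resulting time-dependent curvature bound into a time-independent one. By the parabolic scaling invariance of the Ricci flow we may replace $g(t)$ by $r^{-2}g(r^2t)$, so that it suffices to treat $r=1$; the hypotheses then read $|\Rm(g(0))|\le 1$ on $B_{g_0}(p,1)$ and $V_{g_0}(B_{g_0}(p,1))\ge v$, and the desired conclusion is $|\Rm(x,t)|\le\Lambda$ on $B_{g(t)}(p,\sigma)$ for $t\le\min\{T,\sigma^2\}$, with $\sigma,\Lambda$ depending only on $m,v$.

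First I would upgrade the single volume lower bound into local non-collapsing at all small scales near $p$. Since $|\Rm(g_0)|\le 1$ on $B_{g_0}(p,1)$ we have $\Ric_{g_0}\ge-(m-1)$ there, so Bishop--Gromov volume comparison, fed the one estimate $V_{g_0}(B_{g_0}(p,1))\ge v$, produces $v'=v'(m,v)>0$ with $V_{g_0}(B_{g_0}(q,\rho))\ge v'\rho^m$ for all $q\in B_{g_0}(p,\tfrac12)$ and $\rho\le\tfrac12$. Combining this with the curvature bound, the injectivity-radius estimate of Cheeger--Gromov--Taylor gives $\mathrm{inj}_{g_0}(q)\ge\iota=\iota(m,v)>0$ for $q\in B_{g_0}(p,\tfrac14)$, and we may assume $\iota\le1$. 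Now fix a small parameter $\eta\in(0,1)$ (to be chosen in terms of $m,v$) and work in $g_0$-normal coordinates centred at $p$ on $B_{g_0}(p,\eta\iota)$; the Rauch/Jacobi-field comparison estimates, using only $|\Rm(g_0)|\le1$ on this ball, give $\|(\eta\iota)^{-2}g_0-g_{\mathrm{euc}}\|_{C^0}\le C(m)(\eta\iota)^2$ on the rescaled unit ball. A $C^0$-small perturbation of the flat metric has isoperimetric profile uniformly close to the Euclidean one, so for $\eta$ small enough the isoperimetric inequality on $B_{g_0}(p,\eta\iota)$ lies within Perelman's threshold $\delta_0=\delta_0(m)$ of the Euclidean isoperimetric inequality.

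Set $r_0:=\eta\iota$. Then $|\Rm(g_0)|\le1\le r_0^{-2}$ on $B_{g_0}(p,r_0)$ and the isoperimetric inequality there is almost Euclidean, so Perelman's pseudolocality theorem applies and yields $\varepsilon_0=\varepsilon_0(m)\in(0,1)$ with
$$|\Rm(x,t)|\le t^{-1}+(\varepsilon_0 r_0)^{-2}\qquad\text{for }x\in B_{g(t)}(p,\varepsilon_0 r_0),\ 0<t\le\min\{T,(\varepsilon_0 r_0)^2\}.$$
For $t\ge\tfrac14(\varepsilon_0 r_0)^2$ the right-hand side is at most $5(\varepsilon_0 r_0)^{-2}$, already of the required form; to control $0<t<\tfrac14(\varepsilon_0 r_0)^2$ I would feed this estimate together with the time-zero bound $|\Rm(g_0)|\le1$ on $B_{g_0}(p,r_0)$ into a short argument of the type used in the local curvature estimates of B.-L.~Chen and Lu (equivalently, a continuity/maximum-principle argument on a shrinking family of balls exploiting $\partial_t|\Rm|\le\Delta|\Rm|+C(m)|\Rm|^2$), obtaining a constant $\Lambda_0=\Lambda_0(m)$ and a time-independent bound $|\Rm(x,t)|\le\Lambda_0 r_0^{-2}$ on $B_{g(t)}(p,\tfrac12\varepsilon_0 r_0)$ for $t\le\min\{T,(\tfrac12\varepsilon_0 r_0)^2\}$. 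Taking $\sigma:=\tfrac12\varepsilon_0\eta\iota$ and $\Lambda:=\max\{1,\Lambda_0\}(\eta\iota)^{-2}$, both depending only on $m$ and $v$, and undoing the initial rescaling yields the proposition, the case $t=0$ being immediate from hypothesis (1) since $\sigma<1$ and $\Lambda\ge1$.

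The main obstacle is that the solutions in the statement are complete but are only assumed to have bounded curvature on $[\tau,T]$ for each $\tau>0$, not on all of $[0,T]$; Perelman's proof of pseudolocality (and of the accompanying no-local-collapsing, which one needs to justify the $C^0$-comparison step persists and to keep all constants uniform) proceeds via monotonicity of the \emph{global} $\mathcal W$-functional, whose derivation requires integrability not available in this generality. Overcoming this requires replacing the global entropy by a localized version, which is the technical content of \cite{HeLee2018} and is the heart of the statement; I would simply invoke that result in place of the classical pseudolocality used above. A secondary, routine, point is the bookkeeping that every constant produced along the way ($v'$, $\iota$, $\eta$, $\varepsilon_0$, $\Lambda_0$) depends only on $m$ and $v$.
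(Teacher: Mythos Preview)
Your proposal ultimately lands on the paper's own proof: rescale to $r=1$ and invoke \cite[Corollary~3.1]{HeLee2018}, then rescale back---which is literally the entire argument the paper gives. The preceding detour through Bishop--Gromov, Cheeger--Gromov--Taylor, almost-Euclidean isoperimetry, and classical Perelman pseudolocality is superfluous and, as you yourself correctly diagnose in the final paragraph, cannot stand alone because the hypothesis only grants $\sup_{[\tau,T]}|\Rm|<\infty$ for $\tau>0$; so the substantive content of the proposition lies entirely in the cited result, and your sketch of the classical route neither replaces nor augments that citation.
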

\begin{proof}
The follows by applying \cite[Corollary 3.1]{HeLee2018} to the rescaled Ricci flow solution $r^{-2}g(r^2t)$ and then rescale it back.
\end{proof}

\section{approximation using pseudoconvex domain}\label{appro-stein}
In this section, we will construct local \KR flow solution using the Steinness of $M$. By a result of Grauert \cite{Grauert1958}, there exists a smooth strictly pluri-subharmonic function $\rho$ on $M$ which is a exhaustion function. For $R>1$ large, denote its sublevel set to be
$$U_R=\{x\in M: \rho(x)<R\}.$$
This is a bounded strictly pseudo-convex domain in $M$ as defined in \cite{ChengYau1982}. By Sard's Theorem, we may find a sequence of $R_m\rightarrow +\infty$ such that $d\rho \neq 0$ on $\partial U_{R_m}$. By the result in \cite{ChengYau1982}, there exists a complete \KE metric $\tilde\omega_m=-\Ric(\tilde\omega_m)$ with bounded curvature defined on $U_{R_m}$. Now we will construct approximating \KR flow using $\{(U_{R_m},\tilde\omega_m)\}_{m\in \mathbb{N}}$. Let $\omega_0$ be the \K form of the initial metric $g_0$. On $U_{R_m}$, define $\omega_{0,m,\e}=\omega_0 +\e \tilde\omega_m$. We will use $g_{0,m,\e}$ to denote the corresponding \K metric.

\begin{lma}\label{uniform-equiv-appro}
For any $\e>0$, $\omega_{0,m,\e}=\omega_0 +\e \tilde\omega_m$ is a complete \K metric on $U_{R_m}$ which is uniformly equivalent to $\tilde \omega_m$.
\end{lma}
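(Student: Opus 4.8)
The plan is to establish the two claimed properties of $\omega_{0,m,\e}=\omega_0+\e\tilde\omega_m$ separately: first completeness, then uniform equivalence to $\tilde\omega_m$, both on the fixed domain $U_{R_m}$.

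For completeness, the key observation is that $\omega_0$ is a smooth \K form on all of $M$, hence on the relatively compact closure $\overline{U_{R_m}}$ it is bounded: there is a constant $C_m>0$ (depending on $m$) with $\omega_0\le C_m\,\tilde\omega_m$ on $U_{R_m}$, simply because $\tilde\omega_m$ is a genuine \K metric on $U_{R_m}$ and both are continuous on the compact set $\overline{U_{R_m}}$ — more carefully, one compares $\omega_0$ with any smooth background metric on $\overline{U_{R_m}}$ and then uses that $\tilde\omega_m$ is complete so it must blow up near $\partial U_{R_m}$, giving $\omega_0 \le C_m \tilde\omega_m$ near the boundary; in the interior the comparison is immediate by compactness. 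Granting this, $\omega_{0,m,\e}=\omega_0+\e\tilde\omega_m\ge \e\,\tilde\omega_m$, so any curve of infinite $\tilde\omega_m$-length has infinite $\omega_{0,m,\e}$-length, and since $\tilde\omega_m$ is complete on $U_{R_m}$ by \cite{ChengYau1982}, so is $\omega_{0,m,\e}$.

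For the uniform equivalence, combining the lower bound $\omega_{0,m,\e}\ge\e\,\tilde\omega_m$ with the upper bound $\omega_{0,m,\e}=\omega_0+\e\tilde\omega_m\le (C_m+\e)\tilde\omega_m$ gives
$$\e\,\tilde\omega_m\le \omega_{0,m,\e}\le (C_m+\e)\,\tilde\omega_m$$
on $U_{R_m}$, which is exactly uniform equivalence (with constants depending on $m$ and $\e$ but not on the point). This is all that is needed: in this section $m$ is fixed and the constants are allowed to depend on $m$; the $m$-uniformity that matters later will come from the a-priori estimates of Section 2, not from this lemma.

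The only real point requiring care — the main obstacle, such as it is — is the upper bound $\omega_0\le C_m\tilde\omega_m$ near $\partial U_{R_m}$, i.e. verifying that $\tilde\omega_m$ does not degenerate relative to the smooth metric $\omega_0$ as one approaches the boundary. This follows from the completeness of $\tilde\omega_m$ together with the fact that $\tilde\omega_m$ has bounded curvature and is K\"ahler-Einstein with $\Ric=-\tilde\omega_m$, so standard boundary asymptotics for the Cheng-Yau metric (or simply the volume/Schwarz-type comparison) force $\tilde\omega_m$ to grow at least like the square of the distance to the boundary, dominating the bounded form $\omega_0$. Alternatively, one can cite the explicit boundary behavior of $\tilde\omega_m$ from \cite{ChengYau1982} directly. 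I expect the author's proof to dispatch this quickly by invoking completeness of $\tilde\omega_m$ and smoothness of $\omega_0$ on $\overline{U_{R_m}}$.
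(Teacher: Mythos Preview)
Your proposal is correct and follows the same overall structure as the paper: completeness from $\omega_{0,m,\e}\ge\e\,\tilde\omega_m$, and uniform equivalence from the two-sided bound $\e\,\tilde\omega_m\le\omega_{0,m,\e}\le(C_m+\e)\,\tilde\omega_m$ once $\omega_0\le C_m\,\tilde\omega_m$ is established. The only difference is in how that upper bound is justified.

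Your primary argument --- that completeness of $\tilde\omega_m$ forces it to ``blow up'' near $\partial U_{R_m}$ and hence dominate the smooth bounded form $\omega_0$ --- is heuristically right but not a proof as stated: completeness only says the distance to the boundary is infinite, not that every eigenvalue of $\tilde\omega_m$ (relative to a background metric) tends to infinity. You correctly flag this as the crux, and your stated fallback (cite the explicit Cheng--Yau boundary behavior) is exactly what the paper does, only the paper carries it out concretely. The paper uses that $\rho$ is strictly plurisubharmonic and $\overline{U_{R_m}}$ is compact to get $\omega_0\le C_m\,\ddb\rho$; then the elementary pointwise inequality $\ddb\rho\le -C\,\ddb\log(R_m-\rho)$ (immediate from expanding the right-hand side); and finally the Cheng--Yau result that $\tilde\omega_m$ is uniformly equivalent to the model metric $-\ddb\log(R_m-\rho)$ on $U_{R_m}$. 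Chaining these gives $\omega_0\le C\,\tilde\omega_m$ directly, with no appeal to curvature bounds or growth estimates for $\tilde\omega_m$ beyond what \cite{ChengYau1982} already provides.
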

\begin{proof}
The completeness is clear. It remains to establish the upper bound of $\omega_{0,m,\e}$ with respect to $\tilde \omega_m$. Since $\rho_{i\bar j}>0$ and $U_{R_m}$ is pre-compact in $M$, there is $C_m>>1$ such that 
\begin{equation}
\omega_0 \leq C_m \ddb \rho
\end{equation}
on $U_{R_m}$. On the other hand by \cite{ChengYau1982}, $\tilde\omega_m$ is uniformly equivalent to the standard metric $-\ddb\log (R-\rho)$ on $U_{R_m}$. Therefore,
\begin{equation}
\begin{split}
\omega_{0}&\leq C_m \ddb \rho\\
& \leq -C_m\ddb\log  (R-\rho)\\
&\leq C\tilde \omega_m.
\end{split}
\end{equation}
This completes the proof.
\end{proof}

Using the uniform equivalence of metrics, we can deform the \K metric using the \KR flow on each $U_{R_m}$ with long-time existence.
\begin{prop}\label{approx-1}
There exists a complete long-time solution $\omega_{m,\e}(t)$ to the \KR flow with initial metric $\omega_{0,m,\e}$ on $U_{R_m}\times [0,+\infty)$ such that $\omega_{m,\e}(t)$ is uniformly equivalent to $\tilde \omega_m$ on any $[a,b]\subset [0,+\infty)$.
\end{prop}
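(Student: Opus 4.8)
The plan is to run the \KR flow on the bounded strictly pseudoconvex domain $U_{R_m}$ starting from $\omega_{0,m,\e}$, which by Lemma \ref{uniform-equiv-appro} is a \emph{complete} \K metric uniformly equivalent to the \KE metric $\tilde\omega_m$ of bounded curvature. Since $\omega_{0,m,\e}$ is complete with bounded curvature (it is comparable to $\tilde\omega_m$ and on a precompact region the extra $\omega_0$ term contributes only bounded geometry; more carefully, one checks $\omega_{0,m,\e}$ has bounded curvature by writing it as a small perturbation and using that $U_{R_m}$ is precompact in $M$ so $g_0$ has bounded geometry there), Shi's short-time existence \cite{Shi1989,Shi1997} gives a complete bounded-curvature solution on a maximal interval $[0,T_{m,\e})$. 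The goal is to show $T_{m,\e}=+\infty$ together with the quantitative two-sided bound by $\tilde\omega_m$ on compact time intervals.

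The key device is a parabolic maximum principle comparison argument against the fixed \KE metric $\tilde\omega_m$. For the upper bound, set $u=\tr_{\tilde\omega_m}\omega_{m,\e}(t)$; since $\tilde\omega_m$ has bounded negative curvature (indeed $\Ric(\tilde\omega_m)=-\tilde\omega_m$), the parabolic Schwarz-type inequality together with the evolution of $\tr$ under \KR flow gives $\heat \log u \leq C_1 u + C_2$ for constants depending only on the curvature bound of $\tilde\omega_m$, whence a Gronwall-type estimate bounds $u\le A(b)$ on $[0,b]$ once we know it is bounded at $t=0$ (which is Lemma \ref{uniform-equiv-appro}). For the lower bound, one runs the symmetric argument: estimate $\tr_{\omega_{m,\e}(t)}\tilde\omega_m$, using the parabolic Schwarz Lemma in the form already invoked in Lemma \ref{ins-low} — here $\tilde\omega_m$ plays the role of the background metric $h$ with curvature bounded above by a negative constant — to get $\heat \tr_{\omega_{m,\e}(t)}\tilde\omega_m \leq -c\,(\tr_{\omega_{m,\e}(t)}\tilde\omega_m)^2 + (\text{lower order})$, and combine with the initial two-sided bound. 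These estimates must be localized via a cutoff function since $\tr$-quantities need not decay at infinity, but completeness plus bounded curvature of the solution lets us apply the standard maximum principle of Shi/Ni-Tam on complete manifolds; the precompactness of $U_{R_m}$ in $M$ and the comparability to the model metric $-\ddb\log(R_m-\rho)$ near $\partial U_{R_m}$ ensure all the geometric quantities entering the cutoff argument are controlled.

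Granted the a priori two-sided equivalence $c(b)\tilde\omega_m \le \omega_{m,\e}(t)\le C(b)\tilde\omega_m$ on $[0,b]$, one upgrades to long-time existence by a standard continuity/openness argument: if $T_{m,\e}<\infty$, the metric stays uniformly equivalent to the fixed bounded-curvature metric $\tilde\omega_m$ up to $t=T_{m,\e}$, and then local higher-order (Shi-type $C^k$) estimates for the \KR flow — valid because the evolving metric is uniformly equivalent to a fixed bounded-geometry metric — give uniform bounds on all covariant derivatives of curvature on $[\delta,T_{m,\e}]$, so the flow extends past $T_{m,\e}$, a contradiction. Completeness of $\omega_{m,\e}(t)$ for each $t$ is immediate from the lower bound $\omega_{m,\e}(t)\ge c(b)\tilde\omega_m$ and completeness of $\tilde\omega_m$.

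The main obstacle is the lower bound $\omega_{m,\e}(t)\ge c(b)\tilde\omega_m$ on a fixed time interval: the instantaneous lower bound of Lemma \ref{ins-low} degenerates as $t\to 0$ like $t\,\tilde\omega_m$, which is not enough to prevent the metric from collapsing relative to $\tilde\omega_m$ over a positive time span, so one genuinely needs the comparison estimate starting from the non-degenerate initial bound in Lemma \ref{uniform-equiv-appro} rather than the universal instantaneous estimate. Getting the maximum principle to close requires carefully handling the non-decay at infinity of $\tr_{\omega_{m,\e}(t)}\tilde\omega_m$ — this is where the completeness and bounded curvature of the Shi solution, and the behavior of $\tilde\omega_m$ near $\partial U_{R_m}$, are essential — and once that is in place, the long-time existence and completeness follow by the routine bootstrapping described above.
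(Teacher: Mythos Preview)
The paper's proof is a two-line verification of the hypotheses of \cite[Theorem 1.4]{ChauLee2019}: from Lemma \ref{uniform-equiv-appro} one has $\omega_{0,m,\e}\le \delta^{-1}\tilde\omega_m$, and since $\Ric(\tilde\omega_m)=-\tilde\omega_m$ one has $\omega_{0,m,\e}-s\Ric(\tilde\omega_m)=\omega_{0,m,\e}+s\tilde\omega_m\ge \e\tilde\omega_m$ for all $s>0$; the cited theorem then produces the long-time solution with the stated equivalence. Your proposal instead attempts to reprove this existence result from scratch via Shi short-time existence plus two-sided Schwarz-lemma estimates plus continuation. That is in spirit what \cite{ChauLee2019} does, so the route is reasonable, but your sketch has a genuine gap and an under-justified step.

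The genuine gap is in the lower bound. You invoke Lemma \ref{ins-low} with $\tilde\omega_m$ in the role of $h$, asserting that $\tilde\omega_m$ has ``curvature bounded above by a negative constant''. But $\tilde\omega_m$ is only known to be \KE with $\Ric(\tilde\omega_m)=-\tilde\omega_m$ and to have \emph{bounded} curvature; there is no reason its holomorphic sectional curvature is negative in the interior of $U_{R_m}$ (near $\partial U_{R_m}$ it is asymptotically complex hyperbolic, but this says nothing about compact subsets). With only an upper bound $K>0$ on the bisectional curvature of $\tilde\omega_m$, the Schwarz inequality becomes $\heat\tr_g\tilde\omega_m\le K(\tr_g\tilde\omega_m)^2$, which has the wrong sign and gives no a priori bound. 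The correct mechanism for the lower bound is not a curvature sign on $\tilde\omega_m$ but the \KE equation $\Ric(\tilde\omega_m)=-\tilde\omega_m$ itself: one should reduce to a parabolic Monge--Amp\`ere equation with $\tilde\omega_m$ as reference (so that the $-\Ric$ term becomes $+\tilde\omega_m$) and run $C^0$/trace estimates there --- this is exactly the content packaged in \cite[Theorem 1.4]{ChauLee2019}.

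The under-justified step is the claim that $\omega_{0,m,\e}$ has bounded curvature, needed for Shi's theorem. Uniform equivalence to $\tilde\omega_m$ is not enough, and ``precompact plus small perturbation'' is too vague: you need that $\tilde\omega_m$ has bounded geometry of all orders in Cheng--Yau quasi-coordinates, and that in those coordinates $g_0$ (extending smoothly across $\partial U_{R_m}$) has uniformly bounded $C^k$ norms. This is true and standard, but it is a real argument, and in fact part of the point of \cite{ChauLee2019} is to establish existence \emph{without} first checking bounded curvature of the initial data.
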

\begin{proof}
From Lemma \ref{uniform-equiv-appro}, there exists $\delta>0$ such that
\begin{equation}
\left\{
\begin{array}{ll}
\omega_{0,m,\e}\leq \delta^{-1} \tilde\omega_m;\\
\omega_{0,m,\e}-s\Ric(\tilde\omega_m)&>\delta \tilde\omega_m.\\
\end{array}
\right.
\end{equation}
for any $s>0$. The proposition follows from \cite[Theorem 1.4]{ChauLee2019}.
\end{proof}

\section{Proof of main theorem}
In this section, we will construct a global longtime solution $g(t)$ to the \KR flow with initial metric $g(0)=g_0$ using the idea in \cite{GiesenTopping2011} adapted in higher dimensional \K case. The main goal is to show that $g(t)=\lim_{m\rightarrow +\infty}\lim_{\e \rightarrow 0}g_{m,\e}(t)$ exists. In \cite{HuangLeeTam2019,ChauLee2019}, it was showed that $g_m(t)=\lim_{\e_i\rightarrow }g_{m,\e_i}(t)$ exists smoothly up to $t=0$ for some subsequence $\e_i\rightarrow 0$ using estimates from parabolic Monge-Amp\`ere equation. Since the information of $\Ric(h)$ on $M$ is missing, we need to take an alternative approach to obtain the estimates for compactness when $m\rightarrow +\infty$.

\begin{proof}[Proof of Theorem \ref{main}]
From the discussion in section \ref{appro-stein}, there is a sequence of bounded strictly pseudo-convex domain $U_{R_m}$ exhausting $M$. Moreover, there is a sequence of complete \KR flow $g_{m,\e}(t)$ defined on $U_{R_m}\times [0,+\infty)$ with $g_{m,\e}(0)=g_{0,m,\e}$. Because of completeness, we may apple Lemma \ref{ins-low} and Lemma \ref{inst-R} to deduce that for all $t\in [0,+\infty)$, 
\begin{equation}\label{inst-bdd}
\left\{
\begin{array}{ll}
&\displaystyle \frac{(n+1)\kappa t}{2n} h\leq g_{m,\e}(t);\\
&R_{g_{m,\e}(t)}\geq -nt^{-1}.
\end{array}
\right.
\end{equation}

Here we regard $h$ as a incomplete metric on the pre-compact set $U_{R_m}$. Moreover by uniform equivalence with a metric with bounded curvature, the \KR flow $g_{m,\e}(t)$ has bounded curvature when $t>0$, for example see \cite{ChauLee2019,ShermanWeinkove2012,LottZhang2016}.

Fix a compact set $\Omega\Subset M$, since $g_{0,m,\e}$ converges to $g_0$ uniformly in any $C^k$ as $\e\rightarrow 0$ and $m\rightarrow +\infty$, we can find $r>0$ and $v>0$ small enough such that for all $x\in \Omega$, $m$ large and { $\e_0(n,m,g_0,\Omega)>\e>0$}, we have 
\begin{enumerate}
\item [(a)] $B_{g_{0,m,\e}}(x,r)\Subset U_{R_m}$;
\item [(b)] $|\Rm(g_{0,m,\e})|\leq r^{-2}$ on $B_{g_{0,m,\e}}(x,r)$;
\item [(c)] $V_{g_{0,m,\e}}\left(B_{g_{0,m,\e}}(x,r)\right)\geq v r^{2n}$.
\end{enumerate}

By Proposition \ref{pseudo}, for all $m$ large and { $\e<\e_0(n,m,g_0,\Omega)$}, we have
\begin{equation}
\sup_{\Omega}|\Rm(g_{m,\e})(x,t)|\leq \Lambda(n,g_0,\Omega)
\end{equation}
for $0\leq t\leq \sigma(n,g_0,\Omega)$. In particular, for all $(x,t)\in \Omega\times  [0,\sigma]$, 
\begin{equation}\label{equiv-imm}
e^{-\Lambda \sigma} g_{0,m,\e}\leq g_{m,\e}(t)\leq e^{\Lambda \sigma} g_{0,m,\e}.
\end{equation}

For $t>\sigma$, we make use of \eqref{inst-bdd}. Using the evolution of volume form with the second inequality in \eqref{inst-bdd}, we have 
\begin{equation}
\frac{\partial}{\partial s} \left(\log \frac{\det g_{m,\e}(s)}{\det h}\right)=-R_{g_{m,\e}(t)}\leq \frac{n}{s}.
\end{equation}

By integrating it from $\sigma$ to $t$, we have 
\begin{equation}\label{vol-bound-sigma}
\begin{split}
\det g_{m,\e}(t)&\leq \left( \frac{t}{\sigma}\right)^n\det g_{m,\e}(\sigma)\\
&\leq e^{n\Lambda \sigma}\left( \frac{t}{\sigma}\right)^n \det g_{0,m,\e}.
\end{split}
\end{equation}

Combines \eqref{vol-bound-sigma}, \eqref{inst-bdd} and the elementary inequality $\tr_hg \leq \frac{\det g}{\det h} (\tr_gh)^{n-1}$, we deduce that for all $(x,t) \in \Omega\times [\sigma,+\infty)$,
\begin{equation}\label{local-unif}
\frac{(n+1)\kappa t}{2n}h\leq g_{m,\e}(t)\leq  \left( \frac{e^{\Lambda\sigma}t}{\sigma}\right)^n\left[ \frac{2n}{(n+1)\kappa t}\right]^{n-1}\left(\frac{\det g_{0,m,\e}}{\det h}\right) h.
\end{equation}

Using \eqref{equiv-imm}, \eqref{local-unif} and the fact that $g_{0,m,\e}$ converges to $g_0$ locally uniformly, in conclusion we have shown that for any $T<\infty$ and compact set $\Omega\Subset M$, there is $\lambda>1$ such that for all $m$ sufficiently large, $\e$ sufficiently small and $t\in [0,T]$, 
$$\lambda^{-1} h \leq g_{m,\e}(t)\leq \lambda h.$$

By applying higher order estimate of the \KR flow \cite{ShermanWeinkove2012} on local chart, we have all higher order estimates of $g_{m,\e}(t)$ on $\Omega \times [0,T]$ which is independent of $m>>1$ and { $\e<\e_0(n,m,g_0,\Omega)$}. 

By Ascoli-Arzel\`a Theorem, { for each $m$ large} we may first let $\e_i\rightarrow 0$ for some subsequence $\e_i$ to obtain a \KR flow solution $g_m(t), t\in [0,T]$ on each $U_{R_m}$. Then by diagonal subsequence argument and Ascoli-Arzel\`a Theorem again, we may let $m_i\rightarrow +\infty$ and followed by $T\rightarrow +\infty$ to obtain a global solution $g(t)$ of the \KR flow on $M\times [0,+\infty)$. The instantaneous completeness of $g(t)$  followed by passing \eqref{inst-bdd} to the limiting solution $g(t)$. 
\end{proof}

\begin{proof}[Proof of Corollary \ref{main-cor}]
By Theorem \ref{main}, there is a long-time solution to the \KR flow starting from $h$. The existence of complete negative \KE followed from \cite[Theorem 5.1]{HuangLeeTamTong2018}.
\end{proof}

\begin{rem}
It is clear from the proof of Theorem \ref{main} and Corollary \ref{main-cor} that the reference metric $h$ can be a incomplete metric. Then the resulting \KR flow solution and \KE metric maybe a incomplete metric.
\end{rem}

\begin{rem}
Since $h$ only served as a reference metric in the Schwarz Lemma, the existence of \K metric $h$ can be replaced by the existence of a Hermitian metric with negatively pinched \textit{real bisectional curvature}, see \cite{YangZheng2019,Tang2019}.
\end{rem}

\end{document}